\theoremstyle{plain}
\newtheorem{theorem}{Theorem}
\newtheorem{corollary}{Corollary}
\newtheorem{lemma}{Lemma}
\theoremstyle{definition}
\newtheorem{definition}{Definition}
\newtheorem{example}{Example}
\newtheorem{notation}{Notation}
\newtheorem{remark}{Remark}
\newtheorem{conjecture}{Conjecture}
\newtheorem{algorithm}{Algorithm}
\newcommand{\red}[1]{{\color{black}#1}}
\date{}
\begin{document}

\title[tangent developable]
{Tensor ranks on tangent developable of Segre varieties}
\author{E. Ballico}
\address{Dept. of Mathematics\\
 University of Trento\\
38123 Povo (TN), Italy}
\email{ballico@science.unitn.it}
\author{A. Bernardi}
\address{Galaad, INRIA 2004 route des Lucioles, BP 93, 06902 Sophia Antipolis, France.}
\email{alessandra.bernardi@inria.fr}
\thanks{The first author was partially supported by MIUR and GNSAGA of INdAM (Italy). The second author
was partially supported by CIRM-FBK (TN-Italy), Marie-Curie FP7-PEOPLE-2009-IEF, INRIA Sophia Antipolis Mediterran\'{e}e Project Galaad (France)
and Mittag-Leffler Institut (Sweden)}
\subjclass{14N05, 14Q05}
\keywords{Secant varieties; tensor rank; tangent developable; Segre Varieties; Comon's conjecture.}

\maketitle

\begin{abstract}
We describe the stratification by tensor rank of the points belonging to the tangent developable of any Segre variety. We give algorithms to compute the rank and a decomposition of a tensor belonging to the secant variety of lines of any Segre variety. We prove Comon's conjecture on the rank of symmetric tensors for those tensors belonging to tangential varieties to Veronese varieties.
\end{abstract}

\section*{Introduction}
In this paper we want to address the problem of tensor decomposition over an algebraically closed field $K$ of characteristic $0$ for tensors belonging to a tangent space of the projective variety that parameterizes completely decomposable tensors.

Let $V_1, \ldots , V_d$ be $K$-vector spaces of dimensions $n_1+1, \ldots , n_d+1$ respectively; the projective variety $X_{n_1,\dots ,n_d}\subset \mathbb{P}(V_1 \otimes \cdots \otimes V_d)$ that parameterizes projective classes of completely decomposable tensors $v_1\otimes \cdots \otimes v_d\in V_1 \otimes \cdots \otimes V_d$ is classically known as a Segre variety (see Definition \ref{segre}). Given a tensor $T\in V_1 \otimes \cdots \otimes V_d$, finding the minimum number of completely decomposable tensors such that $T$ can be written as a linear combination of them (see Definition \ref{rank} for the notion of ``tensor rank") is related to the tensor decomposition problem that nowadays seems to be crucial in many applications like Signal Processing (see eg. \cite{afm}, \cite{nl}, \cite{cst}), Algebraic Statistics (\cite{Mc}, \cite{t}),  Neuroscience (eg. \cite{bclmh}). The specific case of tensors belonging to tangential varieties to Segre varieties  (Notation \ref{tangential}) is studied in \cite{bl} 
and  it turns out to be of certain interest in the context of Computational Biology. In fact in \cite{gs} a particular class of statistical models (namely certain context-specific independence model -- CSI) is shown to be crucial  in machine learning and computational biology. L. Oeding has recently shown in \cite{o} how to interpret the CSI model performed by \cite{gs} in terms of  tangential variety to Segre variety. In this setting  B. Sturmfels and P. Zwiernik in  a very recent paper (\cite{sz}) show  how to derive  parametrizations and implicit equations in cumulants for the tangential variety of the Segre variety $X_{1, \ldots , 1}$ and for certain CSI models (see \cite{brt} for a combinatorial point of view on cumulants).

In this paper, after a preliminary section, we give a complete classification of the tensor rank of an element belonging to the tangent developable of any Segre variety. In particular in Theorem \ref{i1} we will prove that if $P\in T_O(X_{n_1, \ldots , n_d}) $  for certain point $O=(O_1, \ldots , O_d)\in X_{n_1, \ldots , n_d}$, then the minimum number $r$ of completely decomposable tensors $v_{1,i}\otimes \cdots \otimes v_{d,i}\in V_1 \otimes \cdots \otimes V_d$ such that $P=\sum_{i=1}^r [v_{1,i}\otimes \cdots \otimes v_{d,i}]$ is equal to the minimum number $\eta_{X_{n_1, \ldots , n_d}}(P)$ for which  there exist  $E\subseteq \{1,\dots ,d\}$ such that $\sharp (E)=\eta_{X_{n_1, \ldots , n_d}}(P)$ and $T_O(X_{n_1, \ldots , n_d}) \subseteq \langle \cup _{i\in E} Y_{O,i}\rangle$ where $Y_{O,i}$ the $n_i$-dimensional linear subspace obtained by fixing all coordinates $j\in \{1,\dots ,d\}\setminus \{i\}$ equal to $O_j\in \mathbb{P}^n_i$ (see Notation \ref{Y}). 
Such a result was independently proved by J. Buczy\'{n}ski and J. M. Landsberg (see Theorem 7.1 in the second version of \cite{bl}). We propose here a different proof. First of all, the construction that we make in our proof allows to write explicit algorithms for the computation of the rank of a given tensor belonging to the secant variety of lines of any Segre variety (Algorithm \ref{algo1}) and for a decomposition of the same (Algorithm \ref{2}). Moreover in the third and in the fourth versions of \cite{bl}, the authors have removed that result for several months. More recently they resubmitted it in a subsequent paper \cite{bl1} Proposition 1.1.

In Section \ref{algorithms} we give the details for Algorithm \ref{algo1} and for Algorithm \ref{2}. 

In the last section we show how to use  Theorem \ref{i1} in order to prove the so called ``Comon's conjecture" in the particular case in which  the points $P\in \tau(X_{n_1, \ldots  , n_d})$ parameterize symmetric tensors. Let us give more details on that. 
\\
Let $V_1=\cdots =V_d=V$ be a vector space of dimension $n+1$ and consider the subspace $S^dV\subset V^{\otimes d}$ of symmetric tensors. The intersection between the Segre variety $X_{n, \ldots , n}$ and $\mathbb{P}(S^dV)$ is a way to interpret the classical Veronese embedding of $\mathbb{P}^n$  via the sections of the sheaf $\mathcal{O}(d)$. Therefore an element of the Veronese variety $\nu_d(\mathbb{P}^n)=X_{n, \ldots , n}\cap \mathbb{P}(S^dV)$ is the projective class of a completely decomposable symmetric tensor. Now, given a point $P\in \mathbb{P}(S^dV)$ that parameterizes a projective class of a symmetric tensor, we can look at two different decompositions of it. Let $v_{1,i}\otimes \cdots \otimes v_{d,i}\in V^{\otimes d}$ and let $w_{j}^{\otimes d}\in S^dV$ , and ask for the minimum $r$ and the minimum $r'$ such that $P=\sum_{i=1}^r[v_{1,i}\otimes \cdots \otimes v_{d,i}]=\sum_{j=1}^{r'}[w_{j}^{\otimes d}]$. In  2008, at the AIM workshop in Palo Alto, USA (see the report \cite{o1}), P. Comon stated the 
following:
\begin{conjecture}\label{comonconj}\textbf{[Comon's Conjecture]} The minimum integer $r$ such that a symmetric tensor $T\in S^dV$ can be written as 
$$T=\sum_{i=1}^rv_{1,i}\otimes \cdots \otimes v_{d,i}$$
for $v_{1,i}\otimes \cdots \otimes v_{d,i}\in V^{\otimes d}$, $i=1, \ldots , r$, is equal to the minimum integer $r'$ for which there exist $w_{j}^{\otimes d}\in S^dV$, $j=1, \ldots ,r'$ such that 
$$T=\sum_{j=1}^{r'}w_j^{\otimes d}.$$
\end{conjecture}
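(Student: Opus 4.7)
The easy direction $r\leq r'$ is immediate: a symmetric Waring expression $T=\sum_{j=1}^{r'}w_j^{\otimes d}$ is already a writing of $T$ as $r'$ decomposable tensors in $V^{\otimes d}$, so any Waring length bounds the ordinary tensor rank. The content of the conjecture is therefore the reverse inequality $r'\leq r$: any decomposition of a symmetric tensor into $r$ possibly non-symmetric rank-one pieces can be replaced by a Waring decomposition of no greater length.

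A naive symmetrization is insufficient. Averaging $T=\sum_{i=1}^r v_{1,i}\otimes\cdots\otimes v_{d,i}$ over the $S_d$-action on tensor factors fixes $T$ but produces $d!\,r$ decomposable pieces, none of which are $d$-th powers, so a genuine algebraic reduction is required. The most structured available route is via apolarity: for $T\in S^dV$ the Waring rank equals the least length of a reduced $0$-dimensional $Z\subset\mathbb{P}V$ whose homogeneous ideal is contained in the apolar ideal $T^\perp\subset\mathrm{Sym}^\bullet V^*$. In the general case my plan would be: given a length-$r$ tensor decomposition, extract an $r$-point candidate in $\mathbb{P}V$ from the vectors $v_{j,i}$, and exploit the symmetry of $T$ (equivalently, the coincidence of all standard flattenings $T_{(k)}$ and $T_{(k')}$) to verify that the saturated ideal of this candidate lies inside $T^\perp$.

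The fundamental obstacle, and the reason Comon's conjecture in full generality is not settled here, is that no mechanism is currently available for converting an arbitrary tensor decomposition of a symmetric tensor into a symmetric one without loss; only restricted regimes (small $r$, small $d$, or specific secant strata) are accessible by flattening, catalecticant and apolarity methods. The contribution of this paper is to handle one such stratum, namely the slice $\tau(\nu_d(\mathbb{P}^n))\cap\mathbb{P}(S^dV)$. The plan there is: view a symmetric $P\in\tau(\nu_d(\mathbb{P}^n))$ also as a point on the tangent developable of the Segre variety $X_{n,\ldots,n}$ at the symmetric base point $O=(O_1,\ldots,O_1)$; apply Theorem \ref{i1} to compute the tensor rank $r$ as the combinatorial invariant $\eta_{X_{n,\ldots,n}}(P)$; and compare the result against the known Waring rank of points of the form $v^{d-1}u$ on the Veronese tangent developable.

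The concrete step that must be carried out is the matching of $\eta_{X_{n,\ldots,n}}(P)$ with the Waring rank $r'$ of $P$. Using the description in Notation \ref{Y}, at a symmetric base point the $d$ subspaces $Y_{O,i}$ are distinct copies of $\mathbb{P}V$ meeting pairwise only at $[O_1^{\otimes d}]$; a linear-dimension count (the affine dimension of $\bigcup_{i\in E}Y_{O,i}$ is $|E|n+1$, versus $dn+1$ for the full tangent space) forces $\eta_{X_{n,\ldots,n}}(P)=d$ for any genuine tangent vector with $u\notin\langle v\rangle$, and $=1$ on the Veronese itself. This matches the classical Waring ranks ($d$ and $1$ respectively), yielding the conjecture in the tangential case. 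I expect this dimension-counting matching to be the only substantive work; what one cannot do with the tools assembled here, and where a genuinely new idea is needed, is extend the same comparison to a symmetric tensor $T$ lying outside the tangent developable, where neither the rank $r$ nor its combinatorial model $\eta$ is controlled.
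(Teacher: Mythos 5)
Your overall route for the accessible case coincides with the paper's: the inequality $r\le r'$ is noted as obvious, and on the tangential stratum the paper (Corollary \ref{comond}) likewise combines Theorem \ref{i1} with the classical fact that a point $[L^{d-1}M]$ with $L,M$ independent has symmetric rank $d$, concluding $r_{X_{n,\dots,n}}(P)=\eta_{X_{n,\dots,n}}(P)=d=r_{\nu_d(\mathbb{P}^n)}(P)$. Your framing of the general conjecture via apolarity and your acknowledgement that no symmetrization mechanism is available are fair, and neither you nor the paper proves the full statement.

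There is, however, one genuine gap in the step you call ``the only substantive work.'' A dimension count cannot force $\eta_{X_{n,\dots,n}}(P)=d$ for an arbitrary tangent vector: the whole content of Theorem \ref{i1} is that $\eta$ takes every value in $\{2,\dots,d\}$ on $T_OX_{n,\dots,n}\setminus X_{n,\dots,n}$, since a point lying in $\langle Y_{O,i}\cup Y_{O,j}\rangle$ already has $\eta=2$ while still being a ``genuine tangent vector.'' Knowing that $\langle\cup_{i\in E}Y_{O,i}\rangle$ is a proper subspace of $T_OX_{n,\dots,n}$ for $\sharp(E)<d$ only settles the generic point, and the symmetric tangent vectors form a very thin subfamily of the tangent space, so genericity is not available. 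What you must use is the symmetry itself: the tangent tensor corresponding to $L^{d-1}M$ is $\sum_{i=1}^{d}v\otimes\cdots\otimes u\otimes\cdots\otimes v$ with $u$ in the $i$-th slot, and membership in $\langle\cup_{i\in E}Y_{O,i}\rangle$ forces the slot-$i$ component to lie in $\langle v\rangle$ for every $i\notin E$, contradicting $u\notin\langle v\rangle$; hence no slot can be dropped and $\eta=d$. (The paper phrases this as $d$ being the minimal number of factors needed, i.e.\ a concision-type statement.) Separately, note that the paper's corollary also disposes of symmetric points of $\sigma_2^{0}$ outside the tangent developable by quoting the known rank-$\le 2$ case of the conjecture from \cite{cglm}; you leave these aside, which is consistent with claiming the conjecture only on $\tau(\nu_d(\mathbb{P}^n))$ but narrower than Corollary \ref{comond}.
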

As far as we know this conjecture is proved if  $r\leq \dim(V)$  (for a general $d$-tensor, $d$ even and large) and if $r=1, 2$ (see \cite{cglm}).% and if $r<d$ (see \cite{m}).
\\ 
In Section \ref{comon} we show that our Theorem \ref{i1} implies that this conjecture is true also for $[T]\in \tau(X_{n, \dots , n})$ (Corollary \ref{comond}).
\\
\\
\textbf{Acknowledgements:} We like to thank B. Sturmfels for asking  to one of us this question at the Mittag-Leffler Institut during the Spring semester 2011 ``Algebraic Geometry with a view towards applications". We also thank the Mittag-Leffler Institut (Djursholm, Stokholm, Sweden), for its hospitality and opportunities.

\section{Preliminaries}
Let us start with the classical definition of the Segre varieties.
\begin{definition}\label{segre}
For all positive integers $d$ and $n_i$, $1\le i \le d$, let 
$$j_{n_1,\dots ,n_d}: \mathbb {P}^{n_1}\times \cdots \times \mathbb {P}^{n_d}\to \mathbb {P}^{N(n_1,\dots ,n_d)},$$ 
with $N(n_1,\dots ,n_d):
= (\prod _{i=1}^{d} (n_i+1))-1$, denote the Segre embedding of $\mathbb {P}^{n_1}\times \cdots \times \mathbb {P}^{n_d}$ obtained by the sections of the sheaf $\mathcal{O}(1,\ldots ,1)$. Set $X_{n_1,\dots ,n_d}:= j_{n_1,\dots ,n_d}(\mathbb {P}^{n_1}\times \cdots \times \mathbb {P}^{n_d})$.
\end{definition}

Observe that if we identify each $\mathbb {P}^{n_i}$ with  $\mathbb {P}(V_i)$ for certain $(n_i+1)$-dimensional vector space $V_i$ for $i=1, \ldots , d$, then an element $[T]\in X_{n_1, \ldots , n_d}$ can be interpreted as the projective class of a completely decomposable tensor $T\in V_1 \otimes \ldots \otimes V_d$, i.e. there exist $v_i\in V_i$ for $i=1, \ldots , d$ such that $T=v_1\otimes \cdots \otimes v_d$.
\\
We can give now the definition of the rank of an element  $P\in \mathbb {P}^{N(n_1,\dots ,n_d)}=\mathbb {P} (V_1 \otimes \cdots \otimes V_d)$.

\begin{definition}\label{rank}
For each $P\in \mathbb {P}^{N(n_1,\dots ,n_d)}$ the rank (or tensor rank) $r_{X_{n_1,\dots ,n_d}}(P)$ of $P$ is the minimal cardinality of a finite set $S\subset X_{n_1,\dots ,n_d}$ such
that $P\in \langle S\rangle$, where $\langle \ \ \rangle$ denote the linear span.
\end{definition}

\begin{notation}\label{tangential}
Let $\tau (X_{n_1,\dots ,n_d})$ denote the tangent developable of $X_{n_1,\dots ,n_d}$, i.e. the union of all tangent spaces $T_PX_{n_1,\dots ,n_d}$ of $X_{n_1,\dots ,n_d}$. Since
$\tau (X_{n_1,\dots ,n_d})$ is closed in the Zariski topology, this is equivalent to the usual definition of the tangent developable of a submanifold of a projective space as the closure of the union of all tangent spaces.
\end{notation}

\begin{remark}\label{ZT}
%Observe that  $X_{n_1,\dots ,n_d}$ is the singular locus of $\tau (X_{n_1,\dots ,n_d})$. 
%\\
Fix any $P\in \tau (X_{n_1,\dots ,n_d})\setminus X_{n_1,\dots ,n_d}$ \red{and let 
$J_{2,O}$ 
%a unique 
be the set of pairs $(O,Z)$ such that $O\in  X_{n_1,\dots ,n_d}$ and 
$Z\subset X_{n_1,\dots ,n_d}$  is a zero-dimensional scheme
such that $Z_{red}= \{O\}$, $\deg (Z)=2$.
Then a pair $(O,Z) \in J_{2,O}$ such that $P$ is contained
in the line $\langle Z\rangle$ is almost always unique:
\begin{equation}\label{ZTequal}
P\in \langle Z \rangle \subset T_O X_{n_1,\dots ,n_d}.
\end{equation}}
\end{remark}

\begin{notation}\label{abuse} Let $\tilde{O}=(O_1, \ldots , O_d)\in \mathbb{P}^{n_1}\times \cdots \times \mathbb{P}^{n_d} $.
With an abuse of notation we will write the point $O =j_{n_1 , \ldots , n_d}(\tilde{O})\in X_{n_1, \ldots  n_d}$ as $O= (O_1,\dots ,O_d)$.
% with $O_i\in \mathbb {P}^{n_i}$ the $i$-th component of $O$
% (i.e  we simply omit to write the Segre embedding $j_{n_1,\dots ,n_d}$).
 %, i.e. for any $O\in X_{n_1,\dots ,n_d}$ we write $O = (O_1,\dots ,O_d)$ 
\end{notation}

\begin{notation}\label{Y} Fix $O = (O_1,\dots ,O_d)\in X_{n_1, \ldots  n_d}$ as above, we indicate with $Y_{O,i}\subset \mathbb {P}^{N(n_1,\dots ,n_d)}$ the $n_i$-dimensional linear subspace obtained by fixing all coordinates $j\in \{1,\dots ,d\}\setminus \{i\}$ equal to $O_j\in \mathbb{P}^n_i$. To be precise:
$$Y_{O,i} =j_{n_1 , \ldots , n_d} (O_1 , \cdots , O_{i-1}, \mathbb{P}^{n_i}, O_{i+1}, \cdots , O_d).$$
\end{notation}

\begin{remark}\label{YE} Let $Y_{O,i}\subset \mathbb {P}^{N(n_1,\dots ,n_d)}$ the $n_i$-dimensional linear subspace just defined. Observe that, as a scheme-theoretic intersection, we have that:
\begin{equation}
T_OX_{n_1,\dots ,n_d}\cap X_{n_1,\dots ,n_d} = \cup _{i=1}^{d} Y_{O,i}.
\end{equation} 
%where each $Y_i\subset \mathbb {P}^{N(n_1,\dots ,n_d)}$ is an $n_i$-dimensional linear subspace, in which all coordinates $j\in \{1,\dots ,d\}\setminus \{i\}$ are fixed and equal to $O_j$. 
Moreover, \red{for any triple $(P,O,Z)\in \mathbb {P}^{N(n_1,\dots ,n_d)} \times  J_{2,O}$ 
%$Z\subset X_{n_1, \ldots , n_d}$ is the unique degree 2 zero-dimensional scheme such that $\langle Z \rangle =T_OX_{n_1, \ldots , n_d}$ 
as in Remark \ref{ZT}, there is a minimal subset $E\subseteq \{1,\dots ,d\}$ such that $\langle Z\rangle \subseteq \langle \cup _{i\in E} Y_{O,i}\rangle$.
%Let $$\eta _{X_{n_1,\dots ,n_d}}((P,O,Z)):= \min \{ \sharp (E)\, | \, E\subseteq \{1,\dots ,d\}, \, P\in\langle Z\rangle \subseteq \langle \cup _{i\in E} Y_{O,i}\rangle\}$$
We define the \emph{type} $\eta _{X_{n_1,\dots ,n_d}}(P)$ of $P$ as follows:
\begin{equation}\label{eta} 
\eta_{X_{n_1,\dots ,n_d}}(P):= \min_{(O,Z) \in  J_{2,O}} \{ \sharp (E)\, | \, E\subseteq \{1,\dots ,d\}, \, P\in \langle Z\rangle \subseteq \langle \cup _{i\in E} Y_{O,i}\rangle\} .
%\eta_{X_{n_1,\dots ,n_d}}(P):= \min \{ \sharp (E)\, | \, E\subseteq \{1,\dots ,d\}, \, \langle Z\rangle \subseteq \langle \cup _{i\in E} Y_{O,i}\rangle  \} .
\end{equation}
%The integer $\sharp (E)$ will be called the \emph{type} $\eta _{X_{n_1,\dots ,n_d}}(P)$ of $P$.            
}
 Notice that $2\le \eta _{X_{n_1,\dots ,n_d}}(P)\le d$. Moreover for a general $Q\in T_OX_{n_1,\dots ,n_d}$ we have that  $\eta _{X_{n_1,\dots ,n_d}}(Q)=d$.
Furthermore every integer $k\in \{2,\dots ,d\}$ is the type of some point of $\tau (X_{n_1,\dots ,n_d})\setminus X_{n_1,\dots ,n_d}$. Finally for all $Q\in X_{n_1,\dots ,n_d}$ we write $\eta _{X_{n_1,\dots ,n_d}}(Q)=1$
and say that $Q$ has type $1$.
\\
\red{Observe that if $\eta _{X_{n_1,\dots ,n_d}}(P)=2$, then the pair $(O,Z)\in  J_{2,O}$ evincing $\eta _{X_{n_1,\dots ,n_d}}(P)$ as in Remark \ref{ZT} is not unique.}
\end{remark}

%\begin{theorem}\label{i1}
%For each $P\in \tau (X_{n_1,\dots ,n_d})$ we have $r_{X_{n_1,\dots ,n_d}}(P) =\eta _{X_{n_1,\dots ,n_d}}(P)$.
%\end{theorem}

In Theorem \ref{i1} we will actually prove that if $P\in \tau (X_{n_1,\dots ,n_d})$, then the integer $ \eta _{X_{n_1,\dots ,n_d}}(P)$ just introduced in (\ref{eta}) is actually the rank of  $P$.
Before proving that theorem we need to introduce the notion of secant varieties and other related objects.

\begin{definition}\label{secant}
For each integer $t\ge 2$ let $\sigma _t(X_{n_1,\dots ,n_d})$ denote the Zariski closure in $\mathbb {P}^{N(n_1,\dots ,n_d)}$ of the union of all $(t-1)$-dimensional linear subspaces of $\mathbb {P}^{N(n_1,\dots ,n_d)}$
spanned by $t$ points of $X_{n_1,\dots ,n_d}$. This object is classically known as the $t$-secant variety of $X_{n_1,\dots ,n_d}$.
\end{definition}

\begin{notation} For each $t\ge 2$ there is a non-empty open subset  of $\sigma _t(X_{n_1,\dots ,n_d})$, that we indicate with  $\sigma _t^{0}(X_{n_1,\dots ,n_d})$, whose elements
 are points of rank exactly equal to $t$. 
 \end{notation}
 
We want to focus our attention on the case $t=2$ that is very particular. 
% In fact it is classically known that\red{, if $\sigma _2(X_{n_1,\dots ,n_d}) \subsetneqq \mathbb {P}^{N(n_1,\dots ,n_d)}$ and if $\eta _{X_{n_1,\dots ,n_d}}(P)\neq 2$, then} each element $P\in \sigma _2(X_{n_1,\dots ,n_d})\setminus X_{n_1,\dots ,n_d}$ is in the linear span of a unique zero-dimensional
% scheme $Z\subset X_{n_1,\dots ,n_d}$, (this classical assertion uses only that $X_{n_1,\dots ,n_d}$ is a smooth submanifold of a projective space; see \cite{bgi}, Proposition 11, and \cite{bgl}, Lemma 2.1.5, for much more). Hence 
Theorem \ref{i1} will give the complete stratification by ranks of points in $\sigma _2(X_{n_1,\dots ,n_d})$ (see also 
 \ref{corollary}).
 Indeed, fix $P\in \sigma _2(X_{n_1,\dots ,n_d})$. Obviously $\tau(X_{n_1, \ldots , n_d})\subseteq \sigma_2(X_{n_1, \ldots , n_d})$. If $\tau(X_{n_1,\dots ,n_d})\neq\sigma_2(X_{n_1,\dots ,n_d})$ and if $P\notin \tau (X_{n_1,\dots ,n_d})$,
then $r_{X_{n_1,\dots ,n_d}}(P)=2$ (in fact if $P\in \sigma_2(X_{n_1,\dots ,n_d})\setminus \tau(X_{n_1,\dots ,n_d})$ there exists, by Definition \ref{secant}, two distinct points of  $X_{n_1,\dots ,n_d}$ whose span contains $P$). If $P\in \tau (X_{n_1,\dots ,n_d})$, then in  Theorem \ref{i1} we will show that $r_{X_{n_1,\dots ,n_d}}(P)\in\{1,\dots ,d\}$. %hence that theorem will describe exactly the stratification by rank of $\tau (X_{n_1,\dots ,n_d})$. 
In particular for each $k\in \{1,\dots ,d\}$, Theorem \ref{i1} will also imply the existence of $P\in \tau (X_{n_1,\dots ,n_d})$ such that $r_{X_{n_1,\dots ,n_d}}(P)=k$.

\begin{definition}
 For any $P\in \mathbb {P}^{N(n_1,\dots ,n_d)}$ the border rank, or border tensor rank, $b_{X_{n_1,\dots ,n_d}}(P)$ is the minimal integer $t$ such that $P\in \sigma _t(X_{n_1,\dots ,n_d})$.
\end{definition}

 Notice that 
 $$b_{X_{n_1,\dots ,n_d}}(P)= 1\; \Longleftrightarrow \; r_{X_{n_1,\dots ,n_d}}(P)= 1 \; \Longleftrightarrow \; P\in X_{n_1,\dots ,n_d}.$$ 
 Thus Theorem \ref{i1} may be considered as the description of the ranks of all points with border rank $2$ (Corollary \ref{corollary}).

For the case of Veronese varieties, i.e. the case of symmetric tensors, and   symmetric border rank $2$ or $3$, see \cite{bgi} and references therein.

\section{Proof of Theorem \ref{i1}.}\label{S2}

This section is entirely devoted to the proof of Theorem \ref{i1}. 

Before going into the details of the proof of Theorem \ref{i1}, we need to remind the following elementary lemma (see e.g. \cite{bb}, Lemma 1).

\begin{lemma}\label{c0}
Fix any $P\in \mathbb {P}^{N(n_1,\dots ,n_d)}$ and two zero-dimensional subschemes $A$, $B$ of $X_{n_1,\dots ,n_d}$ such that
$A \ne B$, $P\in
\langle A\rangle$,
$P\in \langle B\rangle$, $P\notin \langle A'\rangle$ for any $A'\subsetneqq A$ and $P\notin \langle B'\rangle$ for any $B'\subsetneqq B$.
Then $h^1(\mathbb {P}^{N(n_1,\dots ,n_d)},\mathcal {I}_{A\cup B}(1)) >0$.
\end{lemma}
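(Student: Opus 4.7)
My plan is a standard cohomological argument via Mayer--Vietoris for zero-dimensional schemes, using the minimality of $A$ and $B$ to force them (and any $A\cup B$ satisfying $h^1(\mathcal{I}_{A\cup B}(1))=0$) to be linearly independent, and then deriving a contradiction from $P\in\langle A\rangle\cap\langle B\rangle$.

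First I would observe that the hypothesis that $P$ lies in no proper subscheme's span forces each of $A$ and $B$ to be linearly independent in $\mathbb{P}^{N(n_1,\dots,n_d)}$, i.e.\ $\dim\langle A\rangle=\deg(A)-1$ and $\dim\langle B\rangle=\deg(B)-1$. Indeed, from the short exact sequence $0\to\mathcal{I}_A(1)\to\mathcal{O}(1)\to\mathcal{O}_A\to 0$ one reads $\dim\langle A\rangle=\deg(A)-1-h^1(\mathcal{I}_A(1))$; if $h^1(\mathcal{I}_A(1))>0$, one could trim $A$ to a proper subscheme with the same linear span, which would still contain $P$ and violate minimality. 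In particular, this is already the contrapositive of what we want applied to a single scheme, and the whole argument will consist in promoting it to the pair $(A,B)$.

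The core of the proof is the assumption (for contradiction) that $h^1(\mathcal{I}_{A\cup B}(1))=0$. Then every subscheme of $A\cup B$ is linearly independent, and one writes the Mayer--Vietoris sequence for the zero-dimensional schemes
\begin{equation*}
0\to\mathcal{O}_{A\cup B}\to\mathcal{O}_A\oplus\mathcal{O}_B\to\mathcal{O}_{A\cap B}\to 0,
\end{equation*}
which yields $\deg(A\cup B)=\deg(A)+\deg(B)-\deg(A\cap B)$. Combining this with the linear independence of $A$, $B$, $A\cap B$ and $A\cup B$, a Grassmann dimension count gives
\begin{equation*}
\dim\bigl(\langle A\rangle\cap\langle B\rangle\bigr)=\dim\langle A\rangle+\dim\langle B\rangle-\dim\langle A\cup B\rangle=\deg(A\cap B)-1=\dim\langle A\cap B\rangle,
\end{equation*}
with the convention that the empty set has dimension $-1$ (so in particular $\langle A\rangle\cap\langle B\rangle=\emptyset$ when $A\cap B=\emptyset$). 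Since the inclusion $\langle A\cap B\rangle\subseteq\langle A\rangle\cap\langle B\rangle$ is automatic, equality follows.

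To conclude I would exploit the hypothesis $A\ne B$: without loss of generality $A\cap B\subsetneqq A$. But $P\in\langle A\rangle\cap\langle B\rangle=\langle A\cap B\rangle$, contradicting minimality of $A$. Hence $h^1(\mathcal{I}_{A\cup B}(1))>0$. The main obstacle is really the passage from $h^1(\mathcal{I}_{A\cup B}(1))=0$ to the identity $\langle A\rangle\cap\langle B\rangle=\langle A\cap B\rangle$; once this is in place everything else is a one-line comparison, and the case $A\cap B=\emptyset$ is harmless because it immediately forces $\langle A\rangle\cap\langle B\rangle=\emptyset$, which is already incompatible with $P$ being a common point.
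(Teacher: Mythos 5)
Your argument is correct and is essentially the standard one: the paper does not actually prove this lemma but cites \cite{bb}, Lemma 1, whose proof is exactly this Mayer--Vietoris plus Grassmann-formula computation showing that $h^1(\mathcal {I}_{A\cup B}(1))=0$ would force $\langle A\rangle \cap \langle B\rangle =\langle A\cap B\rangle$ and hence contradict the minimality of $A$ or of $B$. One remark: the ``trimming'' claim in your opening paragraph (that $h^1(\mathcal {I}_{A}(1))>0$ lets one pass to a proper subscheme with the same span) is delicate for non-reduced schemes, but it is also superfluous here, since once you assume $h^1(\mathcal {I}_{A\cup B}(1))=0$ the linear independence of $A$, $B$ and $A\cap B$ follows immediately from their being subschemes of $A\cup B$.
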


\begin{lemma}\label{c1}
Fix a zero-dimensonal scheme $\tilde{W}\subset \mathbb {P}^{n_1}\times \cdots \times \mathbb {P}^{n_d}$. Then $h^1(\mathbb
{P}^{n_1}\times \cdots \times \mathbb {P}^{n_d},\mathcal {I}_{\tilde{W}}(1,\dots ,1)) =h^1(\mathbb {P}^{N(n_1,\dots ,n_d)},\mathcal
{I}_{j_{n_1,\dots ,n_d}(\tilde{W})}(1))$. 
\end{lemma}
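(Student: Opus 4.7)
The plan is to express each $h^1$ in the statement as the codimension of a restriction map on global sections, and then to identify these two restriction maps via the Segre embedding, which I will denote simply by $j$ throughout this sketch.

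I would start from the twisted structure sequences of the zero-dimensional subscheme:
\[
0 \to \mathcal{I}_{\tilde W}(1,\ldots,1) \to \mathcal{O}(1,\ldots,1) \to \mathcal{O}_{\tilde W} \to 0
\]
on $\mathbb{P}^{n_1}\times\cdots\times\mathbb{P}^{n_d}$ and
\[
0 \to \mathcal{I}_{j(\tilde W)}(1) \to \mathcal{O}_{\mathbb{P}^{N(n_1,\ldots,n_d)}}(1) \to \mathcal{O}_{j(\tilde W)} \to 0
\]
on $\mathbb{P}^{N(n_1,\ldots,n_d)}$. Passing to long exact sequences, each $h^1$ in question becomes the dimension of the cokernel of the restriction map from global sections of the line bundle to global sections of the structure sheaf of the zero-dimensional subscheme, provided the ambient line bundle itself has vanishing $H^1$. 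For $\mathcal{O}_{\mathbb{P}^{N(n_1,\ldots,n_d)}}(1)$ this is classical; for $\mathcal{O}(1,\ldots,1)$ on the product it follows by K\"unneth from the vanishings $H^1(\mathbb{P}^{n_k},\mathcal{O}(1))=0$ on every factor.

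The crucial identification is supplied by the Segre embedding itself: by construction $j$ is given by the complete linear system of $\mathcal{O}(1,\ldots,1)$, so $j^{*}$ induces an isomorphism
\[
H^0\bigl(\mathbb{P}^{N(n_1,\ldots,n_d)},\mathcal{O}(1)\bigr) \stackrel{\sim}{\longrightarrow} H^0\bigl(\mathbb{P}^{n_1}\times\cdots\times\mathbb{P}^{n_d}, \mathcal{O}(1,\ldots,1)\bigr).
\]
Being a closed immersion, $j$ also identifies $\tilde W$ with $j(\tilde W)$ as schemes, yielding a canonical isomorphism $H^0(\mathcal{O}_{j(\tilde W)}) \cong H^0(\mathcal{O}_{\tilde W})$. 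These two identifications fit into an obvious commutative square with the two restriction maps, so the cokernels have the same dimension and the desired equality of $h^1$'s follows.

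The only delicate point is verifying the compatibility of these identifications with evaluation at $\tilde W$ and $j(\tilde W)$, i.e.\ that the square of restriction maps really commutes on the nose. This is essentially formal, since evaluation of sections commutes with pullback along a closed immersion; hence I do not expect a substantive obstacle beyond checking that the Segre embedding is indeed given by the complete linear system $|\mathcal{O}(1,\ldots,1)|$, which is exactly its definition.
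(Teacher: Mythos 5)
Your proposal is correct and follows essentially the same route as the paper, which simply observes that $j_{n_1,\dots,n_d}$ is the linearly normal embedding given by the complete system $|\mathcal{O}(1,\dots,1)|$ and that $h^1(\mathcal{O}_{\mathbb{P}^{n_1}\times\cdots\times\mathbb{P}^{n_d}}(1,\dots,1))=0$; your write-up just makes explicit the cokernel identification and the commuting square that the paper leaves implicit.
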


\begin{proof}
It is sufficient to observe that $j_{n_1,\dots ,n_d}$ is the linearly normal embedding induced by the complete linear system
$\vert \mathcal {O}_{\mathbb {P}^{n_1}\times \cdots \times \mathbb {P}^{n_d}}(1,\dots ,1)\vert$
% (as in Definition \ref{segre})
 and that
$h^1(\mathbb
{P}^{n_1}\times \cdots \times \mathbb {P}^{n_d},\mathcal {O}_{\mathbb {P}^{n_1}\times \cdots \times \mathbb {P}^{n_d}}(1,\dots
,1))=0$. 
\end{proof}

We are now ready to prove Theorem \ref{i1}.

\begin{theorem}\label{i1} Let $\tau(X_{n_1, \ldots , n_d})$ be the tangential variety   of the Segre variety $X_{n_1, \ldots , n_d}$.
For each $P \in  \tau (X_{n_1,\dots ,n_d})$ we have  that the tensor rank of $P$ is:
$$r_{X_{n_1,\dots ,n_d}}(P) =\eta _{X_{n_1,\dots ,n_d}}(P)$$
where the integer $\eta _{X_{n_1,\dots ,n_d}}(P)$ is the type of $P$ defined in (\ref{eta}).
\end{theorem}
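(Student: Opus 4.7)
My plan is to prove the two inequalities $r_{X_{n_1,\dots,n_d}}(P)\le\eta_{X_{n_1,\dots,n_d}}(P)$ and $r_{X_{n_1,\dots,n_d}}(P)\ge\eta_{X_{n_1,\dots,n_d}}(P)$ separately. I may assume $P\in\tau(X_{n_1,\dots,n_d})\setminus X_{n_1,\dots,n_d}$, since both sides equal $1$ when $P\in X$. Write $\eta:=\eta_{X_{n_1,\dots,n_d}}(P)$, fix a minimising pair $(O,Z)\in J_{2,O}$ and a subset $E\subseteq\{1,\dots,d\}$ of size $\eta$ realising \eqref{eta}; after relabelling $E=\{1,\dots,\eta\}$. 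Choose $v_i\in V_i$ with $O=[v_1\otimes\cdots\otimes v_d]$ and $w_i\in V_i$ (with $[w_i]\neq[v_i]$ for $i\in E$) so that $P$ is represented by $v_1\otimes\cdots\otimes v_d+\sum_{i=1}^{\eta}v_1\otimes\cdots\otimes w_i\otimes\cdots\otimes v_d$. Because every element of $\langle\cup_{i\in E}Y_{O,i}\rangle$ has its $j$-th slot proportional to $v_j$ for $j\notin E$, $P$ factors as $P'\otimes v_{\eta+1}\otimes\cdots\otimes v_d$ with $P'$ a tangent vector to the smaller Segre $X_{n_1,\dots,n_\eta}$ at $(O_1,\dots,O_\eta)$ satisfying $\eta_{X_{n_1,\dots,n_\eta}}(P')=\eta$. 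Since tensoring with rank-one factors preserves both the tensor rank and the type, it suffices to prove the theorem when $\eta=d$.

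For the upper bound in the case $\eta=d$, the elementary identity $(v_1+w_1)\otimes(v_2+w_2)=v_1\otimes v_2+v_1\otimes w_2+w_1\otimes v_2+w_1\otimes w_2$ allows me to rewrite
$$P=(v_1+w_1)\otimes(v_2+w_2)\otimes v_3\otimes\cdots\otimes v_d-w_1\otimes w_2\otimes v_3\otimes\cdots\otimes v_d+\sum_{i=3}^{d}v_1\otimes v_2\otimes\cdots\otimes w_i\otimes\cdots\otimes v_d,$$
exhibiting $P$ as a sum of $d=\eta$ decomposable tensors, and giving $r_{X_{n_1,\dots,n_d}}(P)\le\eta$.

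For the lower bound in the case $\eta=d$, assume by contradiction that $r:=r_{X_{n_1,\dots,n_d}}(P)\le d-1$ and let $S\subset X_{n_1,\dots,n_d}$ be a set of $r$ points evincing the rank. Since $S$ is reduced while $Z$ is non-reduced, $S\neq Z$, and both realise $P$ minimally; Lemma \ref{c0} then yields $h^1(\mathbb{P}^{N(n_1,\dots,n_d)},\mathcal{I}_{S\cup Z}(1))>0$, and Lemma \ref{c1} transfers this to $h^1(\mathbb{P}^{n_1}\times\cdots\times\mathbb{P}^{n_d},\mathcal{I}_W(1,\dots,1))>0$ on the multi-projective space, where $W:=\tilde S\cup\tilde Z$ is zero-dimensional of degree at most $r+2\le d+1$. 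I would then argue by induction on $d$. The base case $d=2$ is immediate since $P\notin X$ already forces $r\ge 2=\eta$. For $d\ge 3$, the decisive structural input is that $\eta=d$ makes $\pi_i(\tilde Z)$ a length-two scheme in every factor $\mathbb{P}^{n_i}$, so $\tilde Z$ is not contained in any fibre of any projection. I would choose an index $i$ and a hyperplane $H_i\subset\mathbb{P}^{n_i}$ adapted to $\pi_i(W)$, set $D_i:=\mathbb{P}^{n_1}\times\cdots\times H_i\times\cdots\times\mathbb{P}^{n_d}$, and use the residual exact sequence
$$0\to\mathcal{I}_{\mathrm{Res}_{D_i}(W)}(1,\dots,0,\dots,1)\to\mathcal{I}_W(1,\dots,1)\to\mathcal{I}_{W\cap D_i}(1,\dots,1)|_{D_i}\to 0$$
(with $0$ in the $i$-th slot of the residual bidegree) to split the positive $h^1$ between two strictly smaller multi-projective problems; each either falls under the inductive hypothesis or vanishes for dimensional reasons, producing the sought contradiction.

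The principal obstacle is exactly this last cohomological step: I must choose $i$ and $H_i$ so that on one hand the residual scheme $\mathrm{Res}_{D_i}(W)$ retains enough of the tangent direction of $\tilde Z$ and is short enough for the inductive hypothesis to apply, and on the other hand the restriction $W\cap D_i$ viewed on $D_i$ admits the required cohomology vanishing, often after a further iteration. Tracking the length drops, the survival of the tangent direction of $\tilde Z$ across the residual step, and the simultaneous dimension reductions in the various factors is the delicate bookkeeping at the technical heart of the argument.
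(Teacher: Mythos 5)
Your upper bound (the explicit identity regrouping the first two tangent directions into $(v_1+w_1)\otimes(v_2+w_2)\otimes v_3\otimes\cdots\otimes v_d - w_1\otimes w_2\otimes v_3\otimes\cdots\otimes v_d$) is correct and is a nice concrete alternative to the paper's span argument, and your reduction to the case $\eta=d$ via concision matches parts (b) and (c) of the paper's proof. The framework for the lower bound -- Lemma \ref{c0} plus Lemma \ref{c1} reducing everything to showing $h^1(\mathcal {I}_{\tilde S\cup\tilde Z}(1,\dots,1))=0$ when $\sharp(S)\le d-1$ and $\tilde Z$ projects to a length-two scheme in every factor -- is also exactly the paper's. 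But that vanishing is the entire content of the theorem, and you have not proved it: you explicitly defer the choice of the factor $i$, the hyperplane $H_i$, and all the ``delicate bookkeeping'' to an unexecuted induction. This is a genuine gap, not a routine verification.

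Two concrete reasons the gap matters. First, your residual sequence in the $i$-th factor has residual bundle $\mathcal{O}(1,\dots,0,\dots,1)$, whose sections cannot separate points of $\mathrm{Res}_{D_i}(W)$ that agree in all factors $j\ne i$; nothing in your setup prevents two points of $S$ (or a point of $S$ and the support of $Z$) from differing only in the $i$-th coordinate, in which case that $h^1$ does not vanish and the induction stalls. Moreover, for $n_i\ge 2$ the restricted problem on $D_i$ still lives on a product of $d$ factors, so it is not covered by induction on $d$; the paper avoids this by first reducing to $n_i=1$ for all $i$. Second, you tacitly assume $S\cup\{O\}$ behaves like a linearly independent set, but the case $O\in\langle S\rangle$ (with $O\in S$ or not) needs separate treatment -- this is the paper's case split (a1)/(a2)/(a3). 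The paper's actual mechanism is different from your Horace-type induction: using $\sharp(S)<d$ it partitions $S$ into groups $S_1,\dots,S_j$ according to coordinates in which their entries differ from those of $O$, assembles from the corresponding coordinate divisors a single $D\in\vert\mathcal{O}_{X_{1,\dots,1}}(1)\vert$ containing $S\cup\{O\}$ with exactly one component through $O$, smooth there and with tangent space missing the direction of $Z$, and concludes $h^1=0$ from the linear independence of $S\cup\{O\}$ plus the fact that $Z\nsubseteq D$. You would need to supply an argument of comparable precision to close your version.
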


We write here the strategy of the proof in order to help the reader in following it.

First of all, we observe that if  $\eta _{X_{n_1,\dots ,n_d}}(P)=1$ there is nothing to prove. So we assume that there exist a point $O\in X_{n_1,\dots ,n_d}$ such that $P\in T_O(X_{n_1,\dots ,n_d})\setminus \{ O\}$.

Moreover we point out that the inequality $r_{X_{n_1,\dots ,n_d}}(P) \le \eta _{X_{n_1,\dots ,n_d}}(P)$ (see (\ref{obvious})) is obvious, then we need only to prove the reverse inequality. 

Then we split the proof in the following cases:

(a) If all the $n_i=1$ and $\eta _{X_{1,\dots ,1}}(P)=d$ then $ r_{X_{1,\dots ,1}}(P)=d$. We will give a proof by contradiction: we assume that $\eta _{X_{1,\dots ,1}}(P)=d$ and that  $r_{X_{1,\dots ,1}}(P)<d$ and we show that in each of the following sub-cases we get a contradiction:

\indent \indent (a1) $O\notin \langle S\rangle$, where $S$ is the set of points computing the rank of $P$;

\indent \indent (a2) $O\in  S$;

\indent \indent (a3) $O\notin S$ and $O\in \langle S\rangle$.

(b) If all the $n_i=1$ and $\eta _{X_{1,\dots ,1}}(P)<d$ $\Rightarrow r_{X_{1,\dots ,1}}(P)=\eta _{X_{1,\dots ,1}}(P)$. 

(c) We conclude the proof by showing that the theorem is true for all $n_i\geq 2$  (this part may be bypassed quoting \cite{lw} where it is shown that secant variety of lines of a Segre variety is contained in the subspace variety).

%\qquad {\emph {Proof of Theorem \ref{i1}.}} 
\begin{proof}
Fix $P\in \tau (X_{n_1,\dots ,n_d})$ and look for $r_{X_{n_1,\dots ,n_d}}(P)$.
\\ 
Since $\eta _{X_{n_1,\dots ,n_d}}(P) = 1$ $\Longleftrightarrow $ $P\in X_{n_1,\dots ,n_d}$ $\Longleftrightarrow $ $r_{X_{n_1,\dots ,n_d}}(P) =1$,
the case $P\in X_{n_1,\dots ,n_d}$ is obvious. 
Hence we may assume $P\notin X_{n_1,\dots ,n_d}$. 
Take $(O,Z)\in  J_{2,O}$ evincing $\eta _{X_{n_1,\dots ,n_d}}(P)$.
%and $Z\subset X_{n_1,\dots ,n_d}$ with $ Z_{red}=\{O\}$, $\deg (Z) =2$ and $P\in \langle Z\rangle$, hence, as in (\ref{ZTequal}), we have that \red{$$P\in \langle Z \rangle\subset T_{O}X_{n_1 , \ldots , n_d}.$$  As we have  seen above, there exist $(O,Z)\in  J_{2,O}$  that satisfies (\ref{ZTequal}), and, if $\sigma_2(X_{n_1, \ldots , n_d})\neq \mathbb{P}^{N(n_1\times \cdots \times n_d)}$, they are uniquely determined by $P$.}
%both the point $O\in X_{n_1, \ldots , n_d}$ and the degree 2 zero-dimensional scheme  $Z\subset X_{n_1, \ldots , n_d}$ that satisfy (\ref{ZTequal}) exist, and moreover they are uniquely determined by $P$. 
Moreover we can think of $Z\subset X_{n_1, \ldots , n_d}$ as 
$$Z= j_{n_1,\dots ,n_d}(\widetilde{Z})$$ 
with $\widetilde{Z} \subset \mathbb {P}^{n_1}\times
\cdots \times \mathbb {P}^{n_d}$ and $\widetilde{Z}\cong Z$.
\\
Now, as in Remark \ref{YE}, fix $E\subseteq \{1,\dots ,d\}$  such that 
$$\sharp (E) =\eta _{X_{n_1,\dots ,n_d}}(P)$$
 and 
$$P\in \langle \cup_{i\in E} Y_{O,i}\rangle$$
(where $Y_{O,i}$ are defined as in Notation \ref{Y}).
\\
 Since each $Y_{O,i} \subset \mathbb{P}^{n_1, \ldots , n_d}$ is a linear subspace, then for each $i\in E$ there is $Q_i\in Y_{O,i}$ such that $P\in \langle \cup _{i\in E}Q_i\rangle$. Thus 
 \begin{equation}\label{obvious}r_{X_{n_1,\dots ,n_d}}(P) \le \eta _{X_{n_1,\dots ,n_d}}(P).\end{equation}
Therefore we need simply  to prove
the opposite inequality. 

For each $j\in \{1,\dots ,d\}$ and each $Q_j\in \mathbb {P}^{n_j}$ (or, with the same abuse of notation as in Notation \ref{abuse}, we can think at a point $Q$ in the Segre variety obtained as $j_{n_1 \ldots , n_d}(\tilde{Q})$ with $\tilde{Q}=(Q_1 , \ldots , Q_d)\in \mathbb{P}^{n_1}\times \cdots \times \mathbb{P}^{n_d}$ and then write  $Q = (Q_1,\dots ,Q_d)\in X_{n_1,\dots ,n_d}$), set:
\begin{equation}\label{X(qj)}
X_{n_1,\dots ,n_d}(Q_j,j):= \{(A_1,\dots ,A_d)\in X_{n_1,\dots ,n_d}:A_j=Q_j\}. 
\end{equation}
 Hence $X(Q_j,j)$ is an $(n_1+\cdots
+n_d-n_j)$-dimensional product of $d-1$ projective spaces embedded as a Segre variety in a linear subspace of $\mathbb
{P}^{N(n_1,\dots ,n_d)}$. 
%Write $Z= j_{n_1,\dots ,n_d}(\widetilde{Z})$ with $\widetilde{Z} \subset \mathbb {P}^{n_1}\times \cdots \times \mathbb {P}^{n_d}$ and $\widetilde{Z}\cong Z$.

Now our proof splits in two parts: in the first one ((a) together with (b)) we study the case of the Segre product of $d$ copies of $\mathbb{P}^1$'s (i.e. we prove the theorem for $\tau(X_{1, \ldots ,1})$); in part (c) we generalize the result obtained for  $X_{1, \ldots ,1}$ to the general case $X_{n_1, \ldots ,n_d}$ with $n_i\geq 1$, $i=1, \ldots , d$.
\\
\\
\quad (a) Here we assume $n_i=1$ for all $i$ and $\eta _{X_{n_1,\dots ,n_d}}(P)=d$. 
Assume $r:= r_{X_{1,\dots ,1}}(P) < d$
and fix a 0-dimensional scheme $S\subset X_{n_1, \ldots , n_d}$ that computes the rank $r$ of $P$, i.e. fix 
$$\tilde{S} \subset \mathbb {P}^1\times \cdots \times \mathbb {P}^1$$ 
such that 
$$j_{n_1,\dots ,n_d}(\tilde{S})=S, \; P\in \langle S\rangle  \hbox{ and }\sharp (j_{n_1,\dots ,n_d}(S))=r.$$ Write 
$$S = \{Q_1,\dots ,Q_r\}$$ and let $(Q_{i,1},\dots ,Q_{i,d})$ be the components of each $Q_i\in X_{1, \ldots ,1}$ with $i=1\ldots ,r$, i.e. let $\tilde{Q}_i=(Q_{i,1},\dots ,Q_{i,d})\in \mathbb{P}^1 \times \cdots \times \mathbb{P}^1$ s.t. $j_{n_1, \ldots , n_d}(\tilde{Q}_i)=Q_i$ and then, according with Notation \ref{abuse}, write $Q_i=(Q_{i,1},\dots ,Q_{i,d})$.
\\
Now write 
$$\tilde{O} =(O_1,\dots ,O_d)
\in \mathbb {P}^1\times \cdots \times \mathbb {P}^1$$ and 
$$O=j_{n_1, \ldots , n_d}(\tilde{O}).$$ Choose homogeneous coordinates on $\mathbb {P}^1$.
Since $X_{1,\dots ,1}$ is a homogeneous variety, it is sufficient to prove the case
$O_i =[1,0]$ for all $i=1, \ldots , d$. 
\\
Notice that $\deg (Z\cup S) = r+2$ if $O\notin S$ and $\deg (Z\cup S)=r+1$ if $O\in S$.
\\
Since $S$ computes $r_{X_{1,\dots ,1
}}(P)$, we have $P\notin \langle  j_{n_1,\dots ,n_d}(\tilde{S}')\rangle$ for any $\tilde{S}'\subseteq \tilde{S}$. Since $P\ne O$ and $\{O\}$ is the only proper subscheme
of $Z$, 
%=T_O(X_{n_1, \ldots , n_d})$, 
we have $P\notin \langle Z'\rangle$ for all proper subschemes $Z'$ of $Z$. Since $P\in \langle Z\rangle \cap \langle S\rangle$, then, by Lemma \ref{c0},
we have $h^1(\mathcal {I}_{ S\cup Z}(1)) >0$. Thus to get a contradiction and prove Theorem \ref{i1} in the case
$n_i=1$ for all $i=1, \ldots , d$ and $\eta _{X_{1,\dots ,1}}(P)=d$, it is sufficient to prove $h^1(\mathcal {I}_{S\cup Z}(1))=0$,
i.e. $h^1(\mathbb
{P}^{1}\times \cdots \times \mathbb {P}^{1},\mathcal {I}_{\widetilde{Z}\cup \tilde{S}}(1,\dots ,1)) =0$ where, as above,  $\widetilde{Z} \subset \mathbb {P}^{n_1}\times
\cdots \times \mathbb {P}^{n_d}$  s.t. $Z= j_{n_1,\dots ,n_d}(\widetilde{Z})$.

First assume the existence of an integer $j\in \{1,\dots ,d\}$ such that $Q_{i,j}=[1,0]$ for all $i\in \{1,\dots ,r\}$.
 We get $S\subset X_{1,\dots ,1}([1,0],j)$, where $X_{n_1, \ldots , n_d}(Q_j,j)$ is defined in (\ref{X(qj)}).
Hence $P\in \langle X_{1,\dots ,1}([1,0],j)\rangle$. However $T_OX_{1,\dots ,1}\cap X_j = \langle \cup _{i\ne j}Y_{O,i}\rangle$. Hence $\eta (P)\le d-1$, but this is a contradiction. Thus:
$$
\hbox{for each } j\in \{1,\dots ,d\} \hbox{ there is } Q_{i_j}\in S \hbox{ such that } Q_{i_j,j}\ne [1,0].
$$

\quad (a1) 
%Let $\tilde{O} \in \mathbb{P}^1\times \cdots \times \mathbb{P}^1$ and $\tilde{S} \subset (\mathbb{P}^1)^d $  such that $j_{1,\ldots , 1}(O')=O$ and $j_{1, \ldots , 1}(\tilde{S})=S$ where $O\in X_{1, \ldots ,1}$ is such that $P\in T_O(X_{1, \ldots ,1})$ and $S\subset X_{1, \ldots , 1}$ computes the rank of $P$.
 Here we assume $O\notin \langle S\rangle$. 
%Hence $O\notin S$. 
Since $S$ computes $r_{X_{1,\dots
,1}}(P)$,
it is linearly independent, i.e. (by Lemma \ref{c1}) $h^1(\mathbb {P}^1\times \cdots \times \mathbb {P}^1,\mathcal {I}_{\tilde{S}}(1,\dots ,1))=0$. 
\\
Since
$O\notin
\langle S\rangle$, we get that
$\tilde{S}\cup
\{\tilde{O}\}$ is linearly independent, i.e. $h^1(\mathbb {P}^1\times \cdots \times \mathbb {P}^1,\mathcal {I}_{\tilde{S}\cup \{\tilde{O}\}}(1,\dots
,1))=0$. 
\\
We fix
$i\in
\{1,\dots ,r\}$ such that $Q_{i,1} \ne [1,0]$ (we just saw the existence of such an integer $i$). 
\\
Write $S_1:= S\cap X_{1,\dots
,1}(Q_{i,1},1)$, where $X_{n_1, \ldots , n_d}(Q_j,j)$ is defined in (\ref{X(qj)}). By construction $Q_i\in S_1$ and hence
$\sharp (S_1)\ge 1$.
\\
Assume for now that $S_1\ne S$ and that there exist $j\in S\setminus S_1$ such that $Q_{j,2}\ne [1,0]$.
Set $S_2:= S\cap X_{1,\dots ,1}(Q_{j,2})$. And so on constructing subsets $S_1,\dots ,S_j$ of $S$ such that:
\begin{itemize}
\item $S_j \nsubseteq \cup _{1\le i<j} S_i$,
\item $Q_{k,i}\ne [1,0] \hbox{ for all } k\in S_i$,
\item $ S_i=S\cap X_{1,\dots ,1}(Q_{h,i},i)$ for all $h\in S_i$,
\end{itemize}
 until we arrive at one of the
following cases:
\begin{itemize}
\item[(i)] $S_1\cup \cdots \cup S_j = S$;
\item[(ii)] $S_1\cup \cdots \cup S_j \ne S$ and $Q_{k,j+1} = [1,0]$ for all $k\in S\setminus (S_1\cup \cdots \cup S_j)$.
\end{itemize}

Now fix an index $m_{i+1}\in S_{i+1}\setminus S_i$, $1 \le i \le j-1$, and set 
$$D_i:= X_{1,\dots ,1}(Q_{m_i,i},i), \; 1\le i \le j,$$ 
i.e. according with (\ref{X(qj)}), $D_i:= \{(A_1,\dots ,A_d)\in X_{n_1,\dots ,n_d}:A_i=Q_{m_i} \hbox{ with } m_i\in S_i\setminus S_{i-1}\}$ for $1\le i \le j$.

First assume that (i) occurs (with $j$ minimal).
Fix $B_i\in \mathbb {P}^1\setminus \{[1,0]\}$, $j+1 \le i \le d-1$ and set:
\begin{itemize}
\item $D_i:= X_{1,\dots ,1}(B_i,i)$, if $j+1\le i \le d-1$;
\item $D_d:= X_{1,\dots ,1}(O_d,d)$;
\item $D:= \cup _{i=1}^{d} D_i$.
\end{itemize}
 Notice that obviously $D\in \vert \mathcal {O}_{X_{1,\dots ,1}}(1)\vert$ and also  that $S\cup \{O\}\subset D$. 
Moreover observe  that $O\in D_i$ if and only if $i=d$. 
Finally, $D_d$ is smooth
at $O$ and $T_OD$ is spanned by $\cup _{i=1}^{d-1} X_{1,\dots ,1}(O_1,i)$.
Therefore $Z\nsubseteq D$ and
 $Z\cup S$ imposes one more condition to $\vert \mathcal {O}_{\mathbb {P}^1\times \cdots \times \mathbb {P}^1}(1,\dots ,1)\vert$ than $S\cup \{O\}$. Since $j_{1,\dots ,1}(\tilde{S}\cup \{\tilde{O}\})$ is linearly independent,
we get $h^1(\mathbb
{P}^{1}\times \cdots \times \mathbb {P}^{1},\mathcal {I}_{\widetilde{Z}\cup \tilde{S}}(1,\dots ,1))=0$ that is a contradiction.

Now assume that (ii) occurs and set:
\begin{itemize} 
\item $M_{j+1} = X_{1,\dots ,1}([1,0],j+1)$;
\item $M_h:=  X_{1,\dots ,1}([1,0],h)$,  for all $h\in \{j+2,\dots d\}$;
\item  $D':= \bigcup _{i=1}^{j} D_i\cup \bigcup _{h=j+1}^{d} M_h$.
\end{itemize}
 Notice that $D'\in \vert \mathcal {O}_{X_{1,\dots ,1}}(1)\vert$ and that $S\cup \{O\}\subset D$.
 The hypersurface $M_{j+1}$ is the unique irreducible component of $D'$ containing $O$. 
Since $M_{j+1}$ is smooth
at $O$ and $T_OM_{j+1}$ is spanned by $\cup _{i\ne j+1}^{d-1} X_{1,\dots ,1}(O_1,i)$, we get as above that $h^1(\mathbb
{P}^{1}\times \cdots \times \mathbb {P}^{1},\mathcal {I}_{\widetilde{Z}\cup \tilde{S}}(1,\dots ,1)) =0$, and than another contradiction.

\quad (a2) Here we assume $O\in S$. 
Hence $S\cup \{O\} =S$ and $j_{1,\dots ,1}(\tilde{S}\cup \{\tilde{O}\})$ is linearly independent. Set $S':= S\setminus \{O\}$. We make the construction of step (a1) with $S'$ instead of $S$, defining the subsets $S_i$ of $S'$ until we get an integer
$j$ such that either $S' = S_1\cup \cdots \cup S_j$ or $S_1\cup \cdots \cup S_j \ne S'$ and $Q_{j+1,i} = [1,0]$ for all $i\in S'\setminus (S_1\cup \cdots \cup S_j)$. In both cases we add the other $d-j$
hypersurfaces, exactly one of them containing $O$. Since $\deg (Z\cup S) = \deg (S\cup \{O\})+1$, we get $h^1(\mathbb
{P}^{1}\times \cdots \times \mathbb {P}^{1},\mathcal {I}_{\widetilde{Z}\cup \tilde{S}}(1,\dots ,1))=0$ as in step (a1) and hence we get a contradiction.

\quad (a3) Here assume $O\notin S$ and $O\in \langle S\rangle$. 
 Hence $\langle Z\rangle \subset \langle S\rangle$. Thus there is $S'\subset S$ such that $\sharp (S') =\sharp (S)-1$ and $\langle S'\cup \{O\}\rangle = \langle S\rangle$.
Hence the set $S_1:= S'\cup \{O\}$ computes $r_{X_{1,\dots ,1}}(P)$. Apply step (a2) to the set $S_1$.
\\
\\
\quad (b) Here we assume $n_i=1$ for all $i$ and $r:= \eta _{X_{1,\dots ,1}}(P)<d$. 
Let $E\subset \{1,\dots ,d\}$ be the minimal subset
such that $P\in \langle \cup _{i\in E} Y_{O,i}\rangle$. By the definition of the type $\eta _{X_{1,\dots ,1}}(P)$ of $P$ we have
$\sharp (E) = \eta _{X_{1,\dots ,1}}(P)$. Set $X':= \{(U_1,\dots ,U_d)\in X_{1,\dots ,1}: U_i=[1,0]$ for all $i\notin E\}$.  We identify
$X'$ with a Segre product of $r$ copies of $\mathbb {P}^1$. Obviously $\eta _{X'}(P) = \eta _{X_{1,\dots ,1}}(P)$. By step (a) we have $r_{X'}(P) =\eta _{X'}(P)$.  We
have $ r_{X_{1,\dots ,1}}(P)=r_{X'}(P) $ by the concision property of tensors (\cite{bl}, Corollary 2.2, or \cite{l}, Proposition 3.1.3.1).
\\
\\
\quad (c) Here we assume $n_i\ge 2$ for some $i$. 
Since $P\in \langle \cup _{i=1}^{d} Y_{O,i}\rangle $, there is
$U_i\in Y_{O,i}$ such that $P\in \langle \{U_1,\dots ,U_d\}\rangle$. Let $U^i_i\in \mathbb {P}^{n_i}$ be the $i$-th component of $U_i$.
The line $L_i\subseteq \mathbb {P}^{n_i}$ is the line spanned by $O_i$ and $U^i_i$. We have $P\in \langle \prod _{i=1}^{d} L_i\rangle$
and  $\eta _{X_{n_1,\dots ,n_d}}(P) = \eta _{X_{1,\dots ,1}}(P)$, where we identify $j_{n_1,\dots ,n_d}(\prod _{i=1}^{d} L_i)$ with the Segre variety
$X_{1,\dots ,1}$. By parts (a) and (b) we have $ r _{X_{1,\dots ,1}}(P)= \eta _{X_{1,\dots ,1}}(P)$. We
have $r_{X_{n_1,\dots ,n_d}}(P) = r_{X_{1,\dots ,1}}(P)$ by the concision property of tensors (\cite{bl}, Corollary 2.2, or \cite{l}, Proposition 3.1.3.1).
\end{proof}

\begin{corollary}\label{corollary} Let $P\in \sigma_2(X_{n_1, \ldots , n_d})$, then:
\begin{itemize}
\item $r_{X_{n_1, \ldots , n_d}}(P)=1$ iff $P\in X_{n_1, \ldots , n_d}$;
\item $r_{X_{n_1, \ldots , n_d}}(P)=2$ iff either  $P\in \sigma_2(X_{n_1, \ldots , n_d})\setminus \tau(X_{n_1, \ldots , n_d})$ or there exist $O\in X_{n_1, \ldots , n_d}$, $O\neq P$, and $Y_{O,i} , Y_{O,j}\subset \mathbb{P}^{N(n_1, \ldots , n_d)f}$ as in Notation \ref{Y}, such that $P\in T_{O}(X_{n_1, \ldots , n_k})\subset Y_{O,i}\cup Y_{O,j}$ for certain $i\neq j \in \{1, \ldots ,d\}$;
\item $r_{X_{n_1, \ldots , n_d}}(P)=k$ with $3\leq k \leq d$ iff $k$ is the minimum integer s.t. there exist $Y_{O,i_1} , \ldots  ,Y_{O,i_k} \subset \mathbb{P}^{N(n_1, \ldots , n_d)}$ as in Notation \ref{Y}, such that $P\in T_{O}(X_{n_1, \ldots , n_k})\subset \cup_{j=1, \ldots k} Y_{O,i_j}$ for certain $i_j \in \{1, \ldots ,d\}$, $j=1, \ldots ,k$.
\end{itemize}
\end{corollary}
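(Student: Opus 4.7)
The plan is to read the corollary as a direct repackaging of Theorem \ref{i1}, combined with the decomposition
\[
\sigma_2(X_{n_1,\dots ,n_d}) \;=\; X_{n_1,\dots ,n_d}\;\sqcup\;\bigl(\sigma_2(X_{n_1,\dots ,n_d})\setminus \tau(X_{n_1,\dots ,n_d})\bigr)\;\sqcup\;\bigl(\tau(X_{n_1,\dots ,n_d})\setminus X_{n_1,\dots ,n_d}\bigr).
\]
I would handle the three strata in sequence, then translate the invariant $\eta_{X_{n_1,\dots ,n_d}}(P)$ back into the explicit condition involving the linear subspaces $Y_{O,i}$ that appears in each bullet.

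For the first bullet the equivalence $r_{X_{n_1,\dots ,n_d}}(P)=1\Longleftrightarrow P\in X_{n_1,\dots ,n_d}$ is immediate from Definition \ref{rank}. For the second bullet I would argue as follows. If $P\in\sigma_2\setminus\tau$, then by Definition \ref{secant} there are two distinct points of $X_{n_1,\dots ,n_d}$ spanning a line through $P$, so $r_{X_{n_1,\dots ,n_d}}(P)\le 2$; since $X_{n_1,\dots ,n_d}\subset\tau(X_{n_1,\dots ,n_d})$, $P\notin X_{n_1,\dots ,n_d}$ and the rank is exactly $2$. If instead $P\in\tau\setminus X_{n_1,\dots ,n_d}$, Theorem \ref{i1} gives $r_{X_{n_1,\dots ,n_d}}(P)=\eta_{X_{n_1,\dots ,n_d}}(P)$, so $r_{X_{n_1,\dots ,n_d}}(P)=2$ is equivalent to $\eta_{X_{n_1,\dots ,n_d}}(P)=2$, which by definition (\ref{eta}) is the existence of $O\in X_{n_1,\dots ,n_d}$, a length-$2$ scheme $Z$ supported at $O$, and indices $i\ne j$ such that $P\in\langle Z\rangle\subseteq\langle Y_{O,i}\cup Y_{O,j}\rangle$. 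The third bullet is the same argument in the range $3\le k\le d$: points of rank $\ge 3$ cannot lie in $\sigma_2\setminus\tau$ nor in $X_{n_1,\dots ,n_d}$, so Theorem \ref{i1} applies and the condition $r_{X_{n_1,\dots ,n_d}}(P)=k$ becomes the minimality statement on $\sharp(E)$ in (\ref{eta}).

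The only non-cosmetic point is reconciling the corollary's formulation (which mentions only the $Y_{O,i}$ and the tangent space $T_O X_{n_1,\dots ,n_d}$) with the definition of $\eta$ (which additionally mentions the scheme $Z$). This is harmless: by Remark \ref{ZT}, every $P\in\tau\setminus X_{n_1,\dots ,n_d}$ lies on a unique tangent line $\langle Z\rangle\subset T_O X_{n_1,\dots ,n_d}$ (for at least one choice of $O$), and by Remark \ref{YE} we have $T_O X_{n_1,\dots ,n_d}=\langle\cup_{i=1}^{d}Y_{O,i}\rangle$, so ``$P$ lies in $\langle\cup_{i\in E}Y_{O,i}\rangle$'' and ``$\langle Z\rangle\subseteq\langle\cup_{i\in E}Y_{O,i}\rangle$'' coincide once the minimal $E$ has been selected. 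Thus the three equivalences in the corollary are just the conversion of $\eta_{X_{n_1,\dots ,n_d}}(P)\in\{1,2,\dots ,d\}$ into the geometric form, stratum by stratum.

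The main, and essentially the only, obstacle is notational bookkeeping: keeping track of whether $P$ itself or the tangent line $\langle Z\rangle$ is what sits inside $\langle\cup_{i\in E}Y_{O,i}\rangle$, and ensuring the minimality of $E$ in the definition of $\eta$ matches the minimality of $k$ in the statement. All of the real content is already in Theorem \ref{i1}; the corollary is a dictionary between tensor rank on $\sigma_2$ and the combinatorial invariant $\eta$, together with the trivial observation that points of $\sigma_2\setminus\tau$ have rank exactly $2$.
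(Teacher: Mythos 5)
Your proposal is correct and follows essentially the same route as the paper: the paper's own proof is a one-line reduction to Theorem \ref{i1} together with the observation that $\sigma_2(X_{n_1,\dots ,n_d})\setminus \tau(X_{n_1,\dots ,n_d})=\sigma_2^0(X_{n_1,\dots ,n_d})$, which is exactly your stratification argument spelled out in more detail. The extra care you take in translating $\eta$ into the $Y_{O,i}$ formulation is sound and harmless.
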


\begin{proof} This corollary follows straightforward from Theorem \ref{i1} and the fact that $\sigma_2(X_{n_1, \ldots , n_d})\setminus \tau(X_{n_1, \ldots , n_d})=\sigma_2^0(X_{n_1, \ldots , n_d})$ when it is not empty.
\end{proof}

The three cases of this Corollary actually occur and can be deduced from the proof of Theorem \ref{i1}.

\begin{example} Let us write for convenience $\mathbb{P}^{n_i}=\mathbb{P}(V_i)$ for certain $(n_i+1)$-dimensional vector spaces over $K$.

\begin{itemize}

\item The points $P\in \mathbb{P}(V_1 \otimes \cdots \otimes V_d)$ for which there exist $v_i\in V_i$, for $i=1, \ldots , d$, such that $P=[v_1 \otimes \cdots \otimes v_d]$, have $r_{X_{n_1, \ldots , n_d}}(P)=1$.

\item Let $P_1=[v_{1,1} \otimes \cdots \otimes v_{1,d}],P_2=[v_{2,1} \otimes \cdots \otimes v_{2,d}]\in X_{n_1, \ldots , n_d}$ with $v_{1,1} \otimes \cdots \otimes v_{1,d}, v_{2,1} \otimes \cdots \otimes v_{2,d}\in V_1 \otimes \cdots \otimes V_d$ linearly independent, then $P=\lambda_1 P_1 + \lambda_2 P_2$, for non-zero coefficients $\lambda_1, \lambda_2 \in K$, has $r_{X_{n_1, \ldots , n_d}}(P)=2$.

\item 
%For all the points $P\in \tau(X_{n_1, \ldots , n_d})$ there exist a point $O\in X_{n_1, \ldots , n_d}$ such that $P$  belongs to $T_O(X_{n_1, \ldots , n_d})$. 
We can observe that, for any $r\leq d$, with an abuse of notation,  there is an obvious way to see $V_1 \otimes \cdots \otimes V_r$ as a natural subspace of $V_1 \otimes \cdots \otimes V_d$. Roughly speaking this is the same to say that  the Segre variety of $r$ factors can be seen as a subvariety of the Segre variety of $d$ factors. 
Let $O=[w_1 \otimes \cdots \otimes w_r] \in X_{n_1, \ldots , n_r}\subseteq  X_{n_1, \ldots , n_d}$.
%$T_O(X_{n_1, \ldots , n_d})= \mathbb{P}(\langle V_1 \otimes w_2 \otimes \cdots \otimes w_d + \cdots + w_1 \otimes \cdots \otimes w_{d-1} \otimes V_d  \rangle)$. Therefore if 
%$T_O(X_{n_1, \ldots , n_d})= \mathbb{P}(\langle V_1 \otimes w_2 \otimes \cdots \otimes w_r + \cdots + w_1 \otimes \cdots \otimes w_{r-1} \otimes V_r  \rangle)=T_O(X_{n_1, \ldots , n_r})$. If such an $r$ minimal, the points of $T_O(X_{n_1, \ldots , n_d})\setminus\{O\}$ that do not belong to a secant line, have rank $r$. In order to produce examples it is sufficient to 
Take $v_i\in V_i$, $i=1, \ldots , r$, such that $\{ w_1, \ldots , v_i , \ldots , w_r \}$ are linearly independent, then $P=\lambda_1 [v_1 \otimes w_2 \otimes \cdots \otimes w_r] + \cdots + \lambda_r[w_1 \otimes \cdots \otimes w_{r-1} \otimes v_r]$ has rank $r$ certain non zero  $\lambda_1, \ldots, \lambda_r \in K$. 
\end{itemize}

\end{example}

\section{Algorithms}\label{algorithms}

The proof of Theorem \ref{i1} turns out to be useful to produce an algorithm to compute the rank of tensors of border rank 2 (Algorithm \ref{algo1}) and also an algorithm to find one of its decompositions (Algorithm \ref{algo2}). 
%\red{QUESTO PEZZO DA TAGLIARE: that will be unique only for rank 2 tensors.} 

Let us spend few lines for a more precise but brief discussion on what is know about uniqueness of tensor decomposition in our case. First of all let us observe that since we are studying the case of tensors of border rank 2, we can use the so called ``concision property of tensors" (\cite{bl}, Corollary 2.2, or \cite{l}, Proposition 3.1.3.1) to claim that all the possible decompositions of a tensor $P\in V_1\otimes \cdots \otimes V_d$ are of type $P=\sum_{i_1}^r=u_{i,1} \otimes \cdots \otimes u_{i,d}$ with $u_{i,j}\in U_j \subseteq V_j$,  $\dim U_j=2$, for $i=1, \ldots, r$, $j=1, \ldots, d$. Even more, if $r:= \eta_{X_{n_1, \ldots , n_d}} (P) <d$, then our problem reduces to the case in which $d-r$ of the factors have dimension $0$. Hence this reduces to a case of Segre variety  $X_{1, \ldots , 1}= X_{1^r}$ of $r$ copies of $\mathbb{P}^1$, and rank $r$.
% In this case, if  $r_{X_{n_1,\dots ,n_d}}(P)=2$ \red{but $P\in \sigma_2(X_{n_1,\dots ,n_d})\setminus \tau(X_{n_1,\dots ,n_d})$ and if $\sigma_2(X_{n_1,\dots ,n_d})\neq \mathbb{P}(V_1 \otimes \cdots \otimes V_d))$}, then its decomposition is unique at least for all $d\ge 6$ (\cite{amr},\cite{bc}). 
If a border rank 2 tensor  is symmetric, then the uniqueness of the decomposition is known to hold for any $d$ (this is the Sylvester case  \cite{Sy}, \cite{CS}, \cite{bgi}, \cite{bcmt} together with \cite{l}, Exercise 3.2.2.2 for the concision property in the symmetric case). The case in which $b_{X_{n_1,\dots ,n_d}}(P)= 2 < r_{X_{n_1,\dots ,n_d}}(P)=r$ is completely understood if $P$ is symmetric (\cite{Sy}, \cite{CS}, \cite{bgi}, \cite{bcmt}): for any tensor on a tangent line to a rational normal curve of degree $r$ there is an $r-1$ dimensional family of possible decompositions. The nonsymmetric case is analogous: Let  $P\in T_O(X_{n_1,\dots ,n_d})$ for an element $O\in X_{n_1,\dots ,n_d}$, and $r:= \eta_{X_{n_
1, \ldots , n_d}}  (P)$. As above, by the concision property,  our problem reduces to the case $P\in T_O(X_{1^r})$ and rank of $P$ equal to $r$.
We have a ``~framing~'' of the $r$-dimensional linear space $T_O(X_{1^r})$ formed
by the $r$-lines $L_1,\dots ,L_r$ through $O$ whose union is $(X_{1^r})\cap T_O(X_{1^r})$. Since
$\eta_{X_{n_1, \ldots , n_d}} (P)=r$, $P$ is not in the linear span of $r-1$ of these lines. Fix any $P_i\in L_i\setminus \{O\}$,
$1\le i \le r-1$ and set $M:= \langle P_1,\dots ,P_{r-1}\rangle$. For each $i\ge 2$, we have $L_i\nsubseteq \langle
L_1\cup \cdots \cup L_i\rangle$.  Hence $\dim (M)=r-2$.

\quad {\emph {Claim:}} There is a unique $P_r\in L_r$ such that $P\in \langle P_1,\dots ,P_r \rangle$.

\quad {\emph {Proof of the Claim:}} Since
$\langle O,P_1,\dots ,P_{r-1}\rangle = \langle L_1\cup \cdots \cup L_{r-1}\rangle$,
$\dim (M) = r-2$ and $T_O(X_{1^r})= \langle L_1\cup \cdots \cup L_r\rangle$,
we have $T_O(X_{1^r}) = \langle L_r\cup M\rangle$ and $L_r\cap M =\emptyset$.
Hence there is $P_r\in L_r$ such that $P\in \langle P_1,\dots ,P_r\rangle$.
Since $\eta_{X_{n_1, \ldots , n_d}} (P)=r$, we have $P\notin \langle L_1\cup \cdots \cup L_{r-1}\rangle$.
Hence $P_r\ne O$. Assume that $P_r$ is not unique and call $P'_r\in L_r$ another point
such that $P\in \langle P'_r\cup M\rangle$. The line $L_r = \langle \{P'_r,P_r\}\rangle$ would
be contained in the $(r-1)$-dimensional linear space $\langle \{P\}\cup M\rangle$.
Hence we would have $L_r\cap M \ne \emptyset$, a contradiction. This proves the following:

\begin{remark} If  $P\in T_O(X_{n_1,\dots ,n_d})$ for an $O\in X_{n_1,\dots ,n_d}$, and $r:= \eta_{X_{n_1, \ldots , n_d}} (P)$, then   the number of solutions of  the tensor decomposition of $P$  depends \red{ on at least $r-1$ parameters and exactly $r-1$ parameters each time the
pair $(O,Z)$ is uniquely determined by $P$.}
\end{remark}

We need now to introduce the notion of flattening and the definition of Hankel operator.

\begin{definition} Let $V_1,\ldots ,V_d$ be vector spaces of dimensions $n_1+1, \ldots , n_d+1$ respectively. Let $(J_1, J_2)$ be a partition of the set $\{1,\ldots ,d\}$. If $J_1 = \{h_1,\ldots ,h_s\}$ and $J_2 = \{1,\ldots ,d\} \setminus J_1 = \{k_1,\ldots ,k_{d-s}\}$, the $(J_1,J_2)$-flattening of $V_1 \otimes \cdots \otimes V_d$ is the following:
$$V_{J_1}\otimes V_{J_2} =(V_{h_1}\otimes \cdots \otimes V_{h_s})\otimes \cdots \otimes (V_{k_1}\otimes \cdots \otimes V_{k_{d-s}}).$$
\end{definition}

\begin{definition}
Let $n := \sum_{i=1}^d n_i$ , set $R := K[x_1,\ldots ,x_n]$. For any $\Lambda \in R^*$, we define the Hankel operator $H_{\Lambda}$ as 
$H_{\Lambda} : R\rightarrow R^*$, $p\mapsto p\cdot {\Lambda}$ where $p\cdot {\Lambda}$ is the linear operator 
$p\cdot {\Lambda} : R\rightarrow K$, $q\mapsto  {\Lambda}(pq)$.
\end{definition}

\begin{algorithm}[\textbf{Rank of a border rank 2 tensor}]\label{algo1} $\hbox{ }$\\
\textbf{Input:} A tensor $T\in V_1 \otimes \cdots \otimes V_d$, with $V_1,\ldots , V_d$ vector spaces of dimensions $n_1+1, \ldots , n_d+1$ respectively.
\\
\textbf{Output:} Either $T\notin \sigma_2(X_{n_1, \ldots , n_d})$, or the rank of $T$.
\begin{enumerate}
	\item\label{1} Write $T$ as an element of $V_{J_1}\otimes V_{J_2}$  for any $(J_1,J_2)$-flattening of $V_1 \otimes \cdots \otimes V_d$.
%\item Indicate with $x_{i_1, \ldots , i_l , \ldots , i_d}$ the $(i_1, \ldots , i_l , \ldots , i_d)$-th coordinate of $T$ in the basis $B$ with $i_j=0 , \ldots , n_j$ for $j=1, \ldots ,d$.
	\item\label{2} % Check if $x_{i_1, \ldots , i_l , \ldots , i_d}x_{j_1, \ldots , j_l , \ldots , j_d}-x_{i_1, \ldots , j_l , \ldots , i_d}x_{j_1, \ldots , i_l , \ldots , j_d}=0$ for all $l=1, \ldots ,d$, $1\leq i_k$, $j_k \leq  n_j $, $k=1, \ldots , d$. If yes then $r(T)=1$ (see eg \cite{Ha}), otherwise go to Step \ref{4}.
Compute all the $2\times 2$ minors of $V_{J_1}\otimes V_{J_2}$  for any $(J_1,J_2)$-flattening of $V_1 \otimes \cdots \otimes V_d$. If all of them are equal to 0, then $r(T)=1$ (see e.g. \cite{Ha}), otherwise go to Step (\ref{3}).
	\item\label{3} Compute all the $3\times 3$ minors of $V_{J_1}\otimes V_{J_2}$  for any $(J_1,J_2)$-flattening of $V_1 \otimes \cdots \otimes V_d$. If at least one of them is different from 0, then $T\notin \sigma_2(X_{n_1, \ldots , n_d})$ and this algorithm stops here; otherwise  $T\in \sigma_2(X_{n_1, \ldots , n_d})$ (see \cite{lm}) and go to Step (\ref{4}).
	\item\label{4} Find $\Lambda \in (V_1 \otimes \cdots \otimes V_d)^*$ that extends $T^*$  (for a precise definition of extension see \cite{bbcm}) and such that $rk(H_{\Lambda})=2$ then pass to Step (\ref{5}).
	\item\label{5} Compute the roots of $ker H_{\Lambda}$ by generalized eigenvector computation (see \cite{bgl}) and check if the eigenspaces are simple. If yes then the rank of $T$ is 2 (see \cite{bbcm}), otherwise go to Step (\ref{6}).
\item\label{6} Write $T$ as a multilinear polynomial $t$ in the ring $K[x_{1,0}, \ldots , x_{1,n_1}; \ldots \ldots ; x_{d,0}, \ldots , x_{d,n_d} ]$, then pass to Step (\ref{7}).
	\item\label{7} Use \cite{car} to write $t$ in the minimum number $q$ of variables. Then the rank of $t$ is equal to $q/2$ (in fact, from the proof of Theorem \ref{i1}, it is always possible to write $T$ as an element of $\tau(X_{1, \ldots , 1})$, then its representative polynomial will be a multilinear form in $K[l_{1,0}, l_{1,1}; \ldots ; l_{q,0}, l_{q,1}]$ with $l_{i,0}, l_{i,1}$ linear forms in $K[x_{i,0}, \ldots , x_{i,n_i}]$ for $i=1, \ldots ,q$).
\end{enumerate}

\end{algorithm}

\begin{algorithm}[\textbf{Decomposition of a border rank 2 tensor}]\label{algo2} $\hbox{ }$\\
\textbf{Input:} A tensor $T\in V_1 \otimes \cdots \otimes V_d$, with $V_1,\ldots , V_d$ vector spaces of dimensions $n_1+1, \ldots , n_d+1$ respectively.
\\
\textbf{Output:} Either $T\notin \sigma_2(X_{n_1, \ldots , n_d})$, or a decomposition of $T$.
\begin{enumerate}
	\item[(a)] Write $T$ as a multilinear polynomial $t$ in the ring $K[x_{1,0}, \ldots , x_{1,n_1}; \ldots \ldots ; x_{d,0}, \ldots , x_{d,n_d} ]$.
	\item[(b)] Use \cite{car} to write $t$ in the minimum number of variables. Then, from the proof of Theorem \ref{i1}, it is always possible to write $t$ as a multilinear form in $K[l_{1,0}, l_{1,1}; \ldots ; l_{d,0}, l_{d,1}]$ with $l_{i,0}, l_{i,1}$ linear forms in $K[x_{i,0}, \ldots , x_{i,n_i}]$ for $i=1, \ldots ,d$. 
	\item[(c)] Run Algorithm \ref{algo1}. If Algorithm \ref{algo1} stops at Step (\ref{2}), go to Step (d). If Algorithm \ref{algo1} stops at Step (\ref{3}), then $T\notin \sigma_2(X_{n_1, \ldots , n_d})$. If Algorithm \ref{algo1} stops at Step (\ref{5}), go to Step (e). Otherwise go to Step (f).
\item[(d)] In this case the rank of $T$ is 1, then solve the system $t=m_1(l_{1,0}, l_{1,1})\cdots m_d(l_{d,0}, l_{d,1})$
where $m_i(l_{i,0}, l_{i,1})$ are linear forms in $K[l_{i,0}, l_{i,1}]$, for $i=1, \ldots , d$ (the solution exists and it is unique up to constants).
\item[(e)] In this case the rank of $T$ is 2,  then solve the system 
$t=m_{1,1}(l_{1,0}, l_{1,1})\cdots m_{d,1}(l_{d,0}, l_{d,1})+m_{1,2}(l_{1,0}, l_{1,1}) \cdots  m_{d,2}(l_{d,0}, l_{d,1})$
where
$m_{i,j}(l_{i,0}, l_{i,1})$ are linear forms in $K[l_{i,0}, l_{i,1}]$, for $i=1, \ldots , d$ and $j=1,2$ (the solution exists). 
%and it is unique up to constants  \red{except if $\sigma_2(X_{n_1, \ldots , n_d})$ fills the ambient space}).
\item[(f)]  In this case the rank of $T$ is $q/2$ and $t\in K[l_{1,0}, l_{1,1}; \ldots ; l_{q,0}, l_{q,1}]$ for certain $q\leq d$ and there exist $q$ two-dimensional subspaces $W_i\subset V_i$ such that $T$ belongs to the Segre variety $X_{1, \ldots , 1}\subset \mathbb{P}(W_1\otimes \cdots \otimes W_q)$. Let $L_T\subset W_1\otimes \cdots \otimes W_q$ be a generic space of dimension $q$ passing through $T$ and compute a point $O\in X_{1, \ldots , 1}$ such that $[T]\in T_O(X_{1, \ldots , 1})$ (it is sufficient to impose that $\mathbb{P}(L_T)\cap X_{1, \ldots , 1}$ has a double solution). Let $O_1(l_{1,0}, l_{1,1})\cdots n_q(l_{q,0}, l_{q,1})$ and go to Step (g).
\item[(g)] Now it is sufficient to solve the system $t=m_1(l_{1,0}, l_{1,1})\cdots n_q(l_{q,0}, l_{q,1})+ \cdots +n_1(l_{1,0}, l_{1,1})\cdots m_q(l_{q,0}, l_{q,1})$ with 
$m_i(l_{i,0}, l_{i,1})$ linear forms in $K[l_{i,0}, l_{i,1}]$, for $i=1, \ldots , q$ (there exist $\infty^{q/2-1}$ solutions of the system).
\end{enumerate}
\end{algorithm}

\section{On  Comon's conjecture}\label{comon}

In this section we want to relate the result obtained in Theorem \ref{i1} to the Comon's conjecture stated in the Introduction.

Let $\nu_d(\mathbb{P}^n)$ be the classical Veronese embedding of $\mathbb{P}^n$ into $\mathbb{P}^{{n+d\choose d}-1}$ via the sections of the sheaf $\mathcal{O}(d)$. As pointed out in the introduction if $\mathbb{P}^n\simeq \mathbb{P}(V)$ with $V$ an $(n+1)$-dimensional vector space, then $\nu_d(\mathbb{P}^n)\subset \mathbb{P}(S^dV)$ can be interpreted as the variety that parameterizes projective classes of completely decomposable symmetric tensors $T\in S^dV$. Moreover
$$\nu_d(\mathbb{P}^n)=X_{n, \ldots , n}\cap \mathbb{P}(S^dV)\subset \mathbb{P}(V^{\otimes d}).$$

\begin{definition}\label{symrank} Let $P\in \mathbb{P}(S^dV)$ be a projective class of a symmetric tensor. We define the symmetric rank $r_{\nu_d(\mathbb{P}^n)}(P)$ of $P$ as the minimum number of $r$ of points $P_i\in \nu_d(\mathbb{P}^n)$ whose linear span contains $P$.
\end{definition}

With this definition, \textbf{Comon's conjecture} (Conjecture \ref{comonconj}) can be rephrased as follows: 
$$\hbox{if } P\in \mathbb{P}(S^dV) \hbox{ then } r_{\nu_d(\mathbb{P}^n)}(P)=r_{X_{n, \ldots , n}}(P).$$

Obviously $r_{X_{n, \ldots , n}}(P) \leq r_{\nu_d(\mathbb{P}^n)}(P)$. In \cite{cglm} the authors prove the reverse inequality for a general $d$-tensor ($d$ even and large)
with rank at most $n$ (Proposition 5.3) 
 and for $ r_{X_{n, \ldots  ,n}}(P)=1,2$. 

With Theorem \ref{i1} we can prove that conjecture for all symmetric tensors of border rank 2.

\begin{corollary}\label{comond} Let $P\in \sigma_2(\nu_d(\mathbb{P}^n))$. Then $r_{\nu_d(\mathbb{P}^n)}(P)=r_{X_{n, \ldots , n}}(P).$
\end{corollary}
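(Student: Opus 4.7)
The plan is to do a case analysis on the Sylvester stratification of $\sigma_2(\nu_d(\mathbb{P}^n))$ by symmetric rank, applying Theorem \ref{i1} in the delicate tangential case. The inequality $r_{X_{n,\ldots,n}}(P) \leq r_{\nu_d(\mathbb{P}^n)}(P)$ is immediate, so only the reverse direction requires work. From \cite{Sy}, \cite{CS}, \cite{bgi}, \cite{bcmt} the symmetric rank on $\sigma_2(\nu_d(\mathbb{P}^n))$ attains only the values $1$, $2$, and $d$, corresponding respectively to $P\in\nu_d(\mathbb{P}^n)$, $P\in\sigma_2(\nu_d(\mathbb{P}^n))\setminus\tau(\nu_d(\mathbb{P}^n))$, and $P\in\tau(\nu_d(\mathbb{P}^n))\setminus\nu_d(\mathbb{P}^n)$. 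The first case is trivial. In the second, the identity $\mathbb{P}(S^dV)\cap X_{n,\ldots,n}=\nu_d(\mathbb{P}^n)$ combined with $P\notin\nu_d(\mathbb{P}^n)$ forces $P\notin X_{n,\ldots,n}$, giving $r_{X_{n,\ldots,n}}(P)\geq 2$ and hence equality. Moreover, Comon's conjecture for $r\leq 2$ (\cite{cglm}) already covers $d\leq 2$, so from now on I assume $d\geq 3$.

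In the core case $P\in\tau(\nu_d(\mathbb{P}^n))\setminus\nu_d(\mathbb{P}^n)$, the inclusion $\tau(\nu_d(\mathbb{P}^n))\subseteq\tau(X_{n,\ldots,n})$ together with Theorem \ref{i1} gives $r_{X_{n,\ldots,n}}(P)=\eta_{X_{n,\ldots,n}}(P)=:\eta$, so the task reduces to proving $\eta=d$. The values $\eta=1$ would place $P$ in $X_{n,\ldots,n}\cap\mathbb{P}(S^dV)=\nu_d(\mathbb{P}^n)$, contradicting the hypothesis; the value $\eta=2$ would give tensor rank $2$, hence by \cite{cglm} symmetric rank $2$, contradicting $r_{\nu_d(\mathbb{P}^n)}(P)=d\geq 3$.

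The main obstacle is excluding the intermediate range $3\leq\eta\leq d-1$; here I plan to exploit the symmetry of $P$ together with the uniqueness of $(O,Z)$ guaranteed by Remarks \ref{ZT} and \ref{YE} whenever $\eta\geq 3$. The symmetric group $\mathfrak{S}_d$ acts on $V^{\otimes d}$ preserving $X_{n,\ldots,n}$ and $J_{2,O}$ and fixes the symmetric point $P$; uniqueness therefore forces $(\sigma(O),\sigma(Z))=(O,Z)$ for every $\sigma\in\mathfrak{S}_d$. Hence $O=(q,\ldots,q)$ for some $q\in\mathbb{P}^n$ and $Z$ corresponds to an $\mathfrak{S}_d$-invariant tangent direction, so $P$ admits an affine representative of the form $c\,q^{\otimes d}+\sum_{i=1}^{d}q^{\otimes(i-1)}\otimes v\otimes q^{\otimes(d-i)}$ with $v\in V$ not proportional to $q$ (otherwise $P\in\nu_d(\mathbb{P}^n)$). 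If $P\in\langle\cup_{i\in E}Y_{O,i}\rangle$ for some $E\subseteq\{1,\ldots,d\}$, I rewrite a suitable affine representative as $\sum_{i\in E}q^{\otimes(i-1)}\otimes u_i\otimes q^{\otimes(d-i)}$ and match coefficients against the linearly independent family $\{q^{\otimes d}\}\cup\{q^{\otimes(i-1)}\otimes v\otimes q^{\otimes(d-i)}\}_{i=1}^{d}$ (extended by a basis of a complement of $\mathrm{span}(q,v)$ in $V$); any index $i\notin E$ then forces a nonzero left-hand $v$-coefficient matched by $0$ on the right, a contradiction. Hence $E=\{1,\ldots,d\}$, so $\eta=d$ and the corollary follows.
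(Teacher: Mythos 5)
Your overall architecture matches the paper's: the same trichotomy $\sigma_2(\nu_d(\mathbb{P}^n))=\nu_d(\mathbb{P}^n)\cup\bigl(\tau(\nu_d(\mathbb{P}^n))\setminus\nu_d(\mathbb{P}^n)\bigr)\cup\sigma_2^0(\nu_d(\mathbb{P}^n))$, Sylvester's computation of the symmetric rank on each stratum, \cite{cglm} for rank at most $2$, and Theorem \ref{i1} to reduce the tangential case to showing $\eta_{X_{n,\ldots,n}}(P)=d$. The two easy strata and the exclusion of $\eta\in\{1,2\}$ are fine. The gap is in the step excluding $3\le\eta\le d-1$: you invoke ``the uniqueness of $(O,Z)$ guaranteed by Remarks \ref{ZT} and \ref{YE} whenever $\eta\ge 3$,'' but those remarks guarantee no such thing --- Remark \ref{ZT} only says the pair is \emph{almost always} unique, and Remark \ref{YE} only records non-uniqueness when $\eta=2$; neither states, let alone proves, uniqueness for $\eta\ge 3$. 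Since $\eta_{X_{n,\ldots,n}}(P)$ is by definition a minimum over \emph{all} pairs $(O,Z)$ with $P\in\langle Z\rangle$, your $\mathfrak{S}_d$-equivariance argument only pins down a fixed point of the action on the set of such pairs; it says nothing about a possible non-symmetric pair realizing a small $E$, and your coefficient matching only treats the symmetric pair $O=(q,\ldots,q)$. Without the uniqueness the exclusion of the intermediate range is incomplete.

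The gap is fillable, and in a way that makes uniqueness irrelevant. Write $P=[L^{d-1}M]$ with $L,M$ independent (the normal form on $\tau(\nu_d(\mathbb{P}^n))\setminus\nu_d(\mathbb{P}^n)$ that the paper also uses). If $P\in\langle\cup_{i\in E}Y_{O,i}\rangle$ for some admissible pair $(O,Z)$ and some $j\notin E$, then every tensor in that span has $j$-th slot equal to $O_j$, so the one-factor flattening $V_j^{*}\to\bigotimes_{k\ne j}V_k$ applied to $P$ has rank at most $1$. But the $j$-th flattening of $\sum_{i=1}^{d}L^{\otimes(i-1)}\otimes M\otimes L^{\otimes(d-i)}$ has image spanned by the two independent tensors $L^{\otimes(d-1)}$ and $\sum_{i\ne j}L\otimes\cdots\otimes M\otimes\cdots\otimes L$, hence rank $2$. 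So $E=\{1,\ldots,d\}$ for every admissible pair, and $\eta_{X_{n,\ldots,n}}(P)=d$. This is essentially the content behind the paper's (terse) assertion that $\eta_{X_{n,\ldots,n}}(P)=d$ follows from $P=[L^{d-1}M]$. Alternatively, keep your symmetry argument but first prove uniqueness of the tangency point for this particular $P$ by the analogous rank computation on the flattening $(V_1\otimes V_2)\otimes(V_3\otimes\cdots\otimes V_d)$, which forces $O_1\otimes O_2=L\otimes L$ and hence $O=\nu_d([L])$.
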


\begin{proof} For any projective variety $X$ we can observe that $\sigma_2(X)=X\cup \tau(X) \cup \sigma_2^0(X)$. 
\\
If $P\in \nu_d(\mathbb{P}^n)\subset X_{n, \ldots , n}$ then there exist $v\in V$ such that $P=[v^{\otimes d} ]\in \nu_d(\mathbb{P}^n)\subset X_{n, \ldots , n}$, therefore obviously $r_{X_{n, \ldots , n}}(P)=r_{\nu_d(\mathbb{P}^n)}(P)=1$.

If $P\in \sigma_2^0(\nu_d(\mathbb{P}^n))$ then $r_{\nu_d(\mathbb{P}^n)}(P)=2$, that implies that $r_{X_{n, \ldots , n}}(P)\leq 2$, and therefore by \cite{cglm}, that we have that $r_{X_{n, \ldots , n}}(P)=r_{\nu_d(\mathbb{P}^n)}(P)=2$.

Now assume that $P\in \tau(\nu_d(\mathbb{P}^n))\setminus \nu_d(\mathbb{P}^n)$ and that $\sigma_2(\nu_d(\mathbb{P}^n))\neq \tau(\nu_d(\mathbb{P}^n))$. For such a $P$ we know that $r_{\nu_d(\mathbb{P}^n)}(P)=d$ (see \cite{Sy}, \cite{CS}, \cite{bcmt}, \cite{bgi}). Any point $P\in \tau(\nu_d(\mathbb{P}^n))\setminus \nu_d(\mathbb{P}^n)$ can be thought as the projective class of a homogeneous degree $d$ polynomial in $n+1$ variables for which there exist two linear forms $L,M$ in $n+1$ variables such that $P=[L^{d-1}M]$; hence $d$ is the minimum integer $k$ such that $P\in \langle\nu_k(\mathbb{P}^n) \rangle$. Therefore $\eta_{X_{n, \ldots , n}}(P)=d$. Since obviously $\tau(\nu_d(\mathbb{P}^n))\subset \tau(X_{n, \ldots , n})$ we have that, by Theorem \ref{i1}, $r_{X_{n, \ldots , n}}(P)=\eta_{X_{n, \ldots , n}}(P)$.
\end{proof}

\providecommand{\bysame}{\leavevmode\hbox to3em{\hrulefill}\thinspace}

\end{document}